\documentclass[a4paper, 12pt]{amsart}
\usepackage[cmtip,all]{xy}

\newtheorem{theorem}{Theorem}[section]

\theoremstyle{definition}
\newtheorem{definition}[theorem]{Definition}

\theoremstyle{definition}
\newtheorem{construction}[theorem]{Construction}

\theoremstyle{definition}
\newtheorem{example}[theorem]{Example}

\begin{document}

\date{2008-11-10}

\title[Free joinings]{Free joinings of C*-dynamical systems}

\author{Rocco Duvenhage}

\address{Department of Mathematics and Applied Mathematics\\
University of Pretoria\\
Pretoria 0002\\
South Africa}

\email{rocco.duvenhage@up.ac.za}

\subjclass[2000]{Primary 46L55; Secondary 46L54, 82C10}

\keywords{C*-dynamical systems, free products, joinings.}

\begin{abstract}
Joinings of C*-dynamical systems are defined in terms of free
products of C*-algebras, as an analogue of joinings of classical
dynamical systems. We then consider disjointness in this context, in
particular for ergodic versus identity systems. Lastly we show how
multi-time correlation functions appearing in quantum statistical
mechanics naturally fit into this joining framework.
\end{abstract}

\maketitle

\section{Introduction}

Joinings, and more specifically disjointness, of measure theoretic
dynamical systems were introduced in \cite{F67} and has since become
an important tool in classical ergodic theory (see for example
\cite{dlR} and \cite{G} for overviews). In noncommutative dynamical
systems, in an operator algebraic framework, a generalization of
joinings in terms of tensor products of operator algebras has been
applied in the study of dynamical entropy \cite{ST}, and a more
systematic study of such generalized joinings was initiated in
\cite{D, D2}. An early exploration of disjointness in the
noncommutative setting, but from the point of view of topological
dynamics, can be found in \cite[Section 5]{A2}.

However, although the tensor product approach gives a direct
generalization of classical joinings, it does have its limitations,
for example if one wants to consider noncommutative versions of
so-called graph joinings of more than two copies of the same system.
In \cite[Construction 3.4]{D} and \cite[Section 5]{D2} this problem
could be handled for the case of two copies of the same system by
considering the commutant of the one copy, since this alleviates
commutation problems sufficiently, but for more than two copies this
simple approach does not help. This is unfortunate, as
``self-joinings" of this type have proven very useful in classical
ergodic theory; see for example \cite{Rud, Ryz, dJY} as sample of
papers that have appeared over the years applying this idea.

In this paper we replace the tensor product with a free product of
operator algebras as an alternative way to approach this problem. In
this case we get an analogy rather than a generalization of
classical joinings. We could refer to joinings based on free
products as ``free joinings'', but since we consider only the free
product setting in this paper, no confusion will arise if we simply
use the term ``joinings''. We will however consider two types of
free products, namely the unital C*-algebra free product, and the
reduced free product of unital C*-algebras with specified states,
and will correspondingly use the terms ``joining'' and ``r-joining''
respectively.

Free products of operator algebras were initially studied in
\cite{C, A, V}, and useful sources for the ideas and tools from this
area that we will use are \cite{VDN} and \cite{V2}. We can add that
free products of operator algebras have already appeared in work on
noncommutative ergodic theory. See for example the recent papers
\cite{AD, F, FM}. In particular we will use a result from \cite{AD}
in Section 3.

We discuss the setting and basic definitions and constructions
regarding joinings in Section 2. In Section 3 we consider
disjointness, and in particular how it relates to ergodicity. In the
tensor product case ergodicity can be characterized in terms of
disjointness from identity systems \cite[Theorem 2.1]{D2}, and here
we explore a similar connection in the free product case for certain
classes of systems. In Section 3 we only consider joinings of two
systems at a time, and the goal is to give an idea of how free
joinings compare with tensor joinings in one of the standard
applications of joinings (characterizing ergodicity). Lastly in
Section 4 we briefly study multi-time correlation functions. This
concept has it origins in quantum statistical mechanics \cite{BF,
ABDF}, but here we also motivate it from a mathematical point of
view in terms of higher order mixing of strongly mixing systems
based on reduced group C*-algebras. We show that multi-time
correlation functions and their so-called asymptotic states (if they
exist), are examples of joinings. This illustrates how joinings of
more than two copies of a system occur naturally in applications.

\section{Joinings}

We fix an arbitrary group $G$ and define a \textit{C*-dynamical
system}, or \textit{system} for short, as a
$\mathbf{B}=\left(B,\nu,\beta\right)$ where $B$ is a unital
C*-algebra with a state $\nu$, and $\beta:G\rightarrow$
Aut$(B):g\mapsto\beta^{g}$ is an automorphism group, i.e. a
representation of $G$ as $\ast$-automorphisms of $B$, such that
$\nu\circ\beta^{g}=\nu$ for all $g\in G$. We write $\beta^{g}$
rather than $\beta_{g}$, since we will shortly add other indices and
this will then be a convenient notation; $\beta^{g}$ is simply the
value of the function $\beta$ at $g$. The identity of $G$ will be
denoted by $1$, so for example $\beta^{1}=$ id$_{B}$. We will call
$\mathbf{B}$ an \emph{identity system} if $\beta^{g}=$ id$_{B}$ for
all $g$, and we will call it \emph{trivial} if $B=\mathbb{C}1_{B}$.

Throughout the rest of this section we consider a family
$\left(\mathbf{A}_{\iota}\right)_{\iota\in I}$ of systems (but keep
in mind they all use the same group $G$), where we use the notation
$\mathbf{A}_{\iota}=\left(A_{\iota},\mu_{\iota},\alpha_{\iota}\right)$,
and let $A:=\ast_{\iota\in I}A_{\iota}$ be the unital C*-algebra
free product \cite[Definition 1.4.1]{VDN}, and let
$\mu:=\ast_{\iota\in I}\mu_{\iota}$ be the free product state on $A$
\cite[Definition 1.5.4]{VDN}. Using the universal property of $A$ we
can define a free product $\alpha$ of the automorphism groups as
follows in terms of a commutative diagram:
\[
\xymatrix{A_\iota\ar[r]^{\psi_\iota}\ar[d]_{\alpha_{\iota}^{g}}%
&A\ar[d]^{\alpha^{g}}\\A_\iota\ar[r]_{\psi_\iota}&A}
\]
Here $\psi_{\iota}:A_{\iota}\rightarrow A$ is the injective unital
$\ast $-homomorphism that appears in the universal property of $A$.
Then one can verify that $\mu\circ\alpha^{g}=\mu$ for all $g\in G$,
i.e. $\left(A,\mu,\alpha\right)$ is a system. In terms of this we
give

\begin{definition}
A \emph{joining} of $\left(\mathbf{A}_{\iota}\right)_{\iota\in I}$
is any state $\omega$ on $A$ such that
$\omega\circ\psi_{\iota}=\mu_{\iota}$ for all $\iota\in I$, and such
that $\omega\circ\alpha^{g}=\omega$ for all $g\in G$. Let
$J\left(\left(\mathbf{A}_{\iota}\right)_{\iota\in I}\right)$ be the
set of all such joinings.
\end{definition}

Note that $\mu\in J\left(  \left(  \mathbf{A}_{\iota}\right)
_{\iota\in I}\right)  $ and we call $\mu$ the \emph{trivial} joining
of $\left( \mathbf{A}_{\iota}\right)  _{\iota\in I}$.

We will also consider a second type of joining in terms of the
reduced free
product $\left(  R,\varphi\right)  :=\ast_{\iota\in I}\left(  A_{\iota}%
,\mu_{\iota}\right)  $, the definition of which is discussed for
example in
\cite{V2}. Here one considers the GNS representation $\left(  H_{\iota}%
,\pi_{\iota},\Omega_{\iota}\right)  $ of $\left(
A_{\iota},\mu_{\iota }\right)  $. Denoting $H_{\iota}\ominus\left(
\mathbb{C}\Omega_{\iota }\right)  $ by $H_{\iota}^{\circ}$ and
setting
\[
H:=\mathbb{C}\Omega\oplus\bigoplus_{n\geq1}\left(  \bigoplus_{\iota_{1}%
\neq\iota_{2}\neq...\neq\iota_{n}}H_{\iota_{1}}^{\circ}\otimes...\otimes
H_{\iota_{n}}^{\circ}\right)
\]
(where we can view $\Omega$ as the number $1$ in $\mathbb{C}$ if we
want to be concrete) one can obtain a representation
$\Lambda_{\iota}$ of $A_{\iota}$ on $H$ (see \cite[Definition
1.5.1]{VDN}), and $R$ is the C*-algebra in $B(H)$ generated by
$\bigcup_{\iota\in I}\Lambda_{\iota}\left(  A_{\iota}\right)  $
while $\varphi(a):=\left\langle \Omega,a\Omega\right\rangle $ for
all $a\in R$. Defining a unitary representation $U_{\iota}$ of
$\alpha_{\iota}$ on $H_{\iota}$ by
\[
U_{\iota}^{g}\pi_{\iota}(a)\Omega_{\iota}:=\pi_{\iota}\left(\alpha_{\iota}
^{g}(a)\right)  \Omega_{\iota}
\]
for all $a\in A_{\iota}$ and all $g\in G$, we obtain a unitary
representation $U$ of $G$ on $H$ by setting $U^{g}\Omega:=\Omega$
and $U^{g}\left(x_{1} \otimes...\otimes x_{n}\right)
:=\left(U_{\iota_1}^{g}x_{1}\right)\otimes...\otimes\left(U_{\iota_n}^{g}x_{n}\right)$
for all elementary tensors $x_{1}\otimes...\otimes x_{n}\in
H_{\iota_{1}}^{\circ}\otimes ...\otimes H_{\iota_{n}}^{\circ}$ and
all $g\in G$. Setting $\pi:=\ast_{\iota\in
I}\Lambda_{\iota}:A\rightarrow B(H)$ it can be shown that
\[
U^{g}\pi(a)\Omega=\pi(\alpha^{g}(a))\Omega
\]
for all $a\in A$ and $g\in G$. It can furthermore be shown that
\[
\rho^{g}(a):=U^{g}a\left(U^{g}\right)^{\ast}
\]
gives a well defined automorphism group $\rho:G\rightarrow$ Aut$(R)$
which satisfies $\varphi\circ\rho^{g}=\varphi$ for all $g\in G$. In
other words we have again obtained a system
$\left(R,\varphi,\rho\right)$. In terms of this we give

\begin{definition}
A \emph{reduced free product joining} (or \emph{r-joining} for
short) of $\left(\mathbf{A}_{\iota}\right)_{\iota\in I}$ is any
state $\omega$ on $R$ such that
$\omega\circ\Lambda_{\iota}=\mu_{\iota}$ for all $\iota\in I$, and
such that $\omega\circ\rho^{g}=\omega$ for all $g\in G$. Let
$J_{r}\left( \left(  \mathbf{A}_{\iota}\right)  _{\iota\in I}\right)
$ be the set of all such joinings.
\end{definition}

Note that $\varphi\in J_{r}\left(  \left(  \mathbf{A}_{\iota}\right)
_{\iota\in I}\right)  $ and we call $\varphi$ the \emph{trivial}
r-joining of $\left(  \mathbf{A}_{\iota}\right)  _{\iota\in I}$.

In the case of joinings we now show how to construct an analogue of
(a class of) graph joinings that appear in classical ergodic theory.
We will use this construction in the subsequent sections.

\begin{construction}
Assume that the systems $\left( \mathbf{A}_{\iota}\right) _{\iota\in
I}$ are \emph{factors} of some system $\mathbf{B=}\left(
B,\nu,\beta\right)  $, i.e. there are unital $\ast$-homomorphisms
$h_{\iota }:A_{\iota}\rightarrow B$ such that $\nu\circ
h_{\iota}=\mu_{\iota}$ and $\beta^{g}\circ
h_{\iota}=h_{\iota}\circ\alpha_{\iota}^{g}$. (A simple instance of
this is $\mathbf{A}_{\iota}=\mathbf{B}$ for all $\iota$, in which
case we will end up with a so-called self-joining, since it will be
a joining of copies of $\mathbf{B}$.) Now we use the universal
property of $\mathbf{A}$ to define a unital $\ast$-homomorphism
$\delta:A\rightarrow B$ by $\delta\circ\psi_{\iota}=h_{\iota}$ for
all $\iota\in I$, i.e. by the following commutative diagram:
\[
\xymatrix{A_\iota\ar[r]^{\psi_\iota}\ar[d]_{h_\iota}&A\ar[dl]^\delta\\B}
\]
Then we set
\[
\Delta:=\nu\circ\delta
\]
which is a joining of $\left(  \mathbf{A}_{\iota}\right)  _{\iota\in
I}$ as we now show.

Clearly $\Delta$ is a state on $A$, and
$\Delta\circ\psi_{\iota}=\nu\circ h_{\iota}=\mu_{\iota}$. One can
view $\Delta$ as an analogue of a diagonal measure. Combining the
diagram for $\delta$ with the diagram for $\alpha$, we obtain the
commutative diagram
\[
\xymatrix{A_\iota\ar[r]^{\psi_\iota}\ar[d]_{h_\iota\circ\alpha^g_\iota}&A\ar[dl]^{\delta\circ\alpha^g}\\B}
\]
However, combining $\delta$ 's diagram with
\[
\xymatrix{B&&A_\iota\ar[d]^{h_\iota}\ar[ll]_{h_\iota\circ\alpha^g_\iota}\\&&B\ar[ull]^{\beta^g}}
\]
we obtain the commutative diagram
\[
\xymatrix{A_\iota\ar[r]^{\psi_\iota}\ar[d]_{h_\iota\circ\alpha^g_\iota}&A\ar[dl]^{\beta^g\circ\delta}\\B}
\]
It follows from the two diagrams that we have just obtained,
together with the universal property, that
$\beta^{g}\circ\delta=\delta\circ\alpha^{g}$, and therefore
$\Delta\circ\alpha^{g}=\nu\circ\beta^{g}\circ\delta=\nu\circ
\delta=\Delta$. Hence $\Delta$ is indeed a joining of $\left(  \mathbf{A}%
_{\iota}\right)  _{\iota\in I}$.

Assuming that $G$ is abelian, we can take this construction further
by considering a family $\bar{g}:=\left(  g_{\iota}\right)
_{\iota\in I}$ of
elements of $G$, and defining a unital $\ast$-homomorphism $\delta_{\bar{g}%
}:A\rightarrow B$ by
$\delta_{\bar{g}}\circ\psi_{\iota}=h_{\iota}\circ
\alpha_{\iota}^{g_{\iota}}$ using the universal property of $A$. We
then set
\[
\Delta_{\bar{g}}:=\nu\circ\delta_{\bar{g}}%
\]
which is again a joining by similar arguments as for $\Delta$: We
obtain $\left(  \delta_{\bar{g}}\circ\alpha^{g}\right)
\circ\psi_{\iota}=h_{\iota }\circ\alpha_{\iota}^{g_{\iota}g}$ from
$\alpha$ and $\delta_{\bar{g}}$ 's diagrams, and $\left(
\beta^{g}\circ\delta_{\bar{g}}\right)  \circ\psi
_{\iota}=h_{\iota}\circ\alpha^{gg_{\iota}}$ from $\delta_{\bar{g}}$
's diagram
combined with $\beta^{g}\circ\left(  h_{\iota}\circ\alpha_{\iota}^{g_{\iota}%
}\right)  =h_{\iota}\circ\alpha_{\iota}^{gg_{\iota}}$. Since $G$ is
abelian, it follows that
$\beta^{g}\circ\delta_{\bar{g}}=\delta_{\bar{g}}\circ \alpha^{g}$.
\end{construction}

As we will see in the next section (in Theorem 3.3's proof), the
joinings in Construction 2.3 are generally not trivial. Simple
nontrivial r-joinings will also be discussed in the next section.

Another construction that will be used in the next section is the
following:

\begin{construction}
Let $\omega$ be any state on $A$ such that $\omega\circ
\psi_{\iota}=\mu_{\iota}$ and let
$\left(H_{\omega},\pi_{\omega},\Omega_{\omega}\right)$ be the GNS
representation of $\left(A,\omega\right)$. Set
$\gamma_{\omega}:=\pi_{\omega}\left( \cdot\right) \Omega_{\omega}$
and $\gamma_{\iota}:=\gamma_{\omega}\circ\psi_{\iota}$, and let
$H_{\iota}$ be the closure of $\gamma_{\iota}\left(A_{\iota}\right)$
in $H_{\omega}$, so we in effect obtain
$\gamma_{\iota}:A_{\iota}\rightarrow H_{\iota}$. It is easily seen
that we can take the GNS representations of
$\left(A_{\iota},\mu_{\iota}\right)$ 's discussed earlier to be
given by $\Omega_\iota:=\Omega_\omega$ and
$\pi_{\iota}(a)\gamma_\iota(b)=\gamma_\iota(ab)$ on the $H_{\iota}$
that we have just obtained.

Denote the projection of $H_{\omega}$ onto $H_{\iota}$ by
$P_{\iota}$ and set $P_{\iota}^{\kappa}:=P_{\iota}|_{H_{\kappa}}$
for all $\iota,\kappa\in I$. It is easy to verify that
$P_{\iota}^{\kappa}$ is the unique function $H_{\kappa }\rightarrow
H_{\iota}$ satisfying $\left\langle P_{\iota}^{\kappa
}x,y\right\rangle =\left\langle x,y\right\rangle $ for all $x\in
H_{\kappa}$ and $y\in H_{\iota}$, and we can call it a
\emph{conditional expectation operator}.

Assume furthermore that $\omega$ is a joining of
$\left(\mathbf{A}_{\iota }\right)_{\iota\in I}$, define a unitary
representation $U_{\omega}$ of $\alpha$ on $H_{\omega}$ by
\[
U_{\omega}^{g}\pi_{\omega}(a)\Omega_{\omega}:=\pi_{\omega}(\alpha_{g}(a))\Omega_{\omega}
\]
and let $U_{\iota}$ be defined as before. (We can note that if
$\omega=\mu$, then $\left(
H_{\omega},\pi_{\omega},\Omega_{\omega}\right)  $ will be (unitarily
equivalent to) $\left(  H,\pi,\Omega\right)  $ which we defined
earlier.) It is straightforward to show that
$U_{\omega}^{g}|_{H_{\iota}}=U_{\iota}^{g}$ for all $g$ and $\iota$.

Combining all this we find that
\[
P_{\iota}^{\kappa}U_{\kappa}^{g}=U_{\iota}^{g}P_{\iota}^{\kappa}
\]
for all $g$, $\iota$ and $\kappa$, since
$\left\langle\left(U_{\iota}^{g}\right)
^{\ast}P_{\iota}^{\kappa}U_{\kappa}^{g}x,y\right\rangle
=\left\langle U_{\kappa}^{g}x,U_{\iota}^{g}y\right\rangle
=\left\langle U^{g}x,U^{g}y\right\rangle =\left\langle
x,y\right\rangle =\left\langle P_{\iota}^{\kappa}x,y\right\rangle$
for all $x\in H_\kappa$ and $y\in H_\iota$.
\end{construction}

\section{Disjointness and ergodicity}

In classical ergodic theory disjointness of two systems refers to
them having only one joining, for example any ergodic system is
disjoint from all identity systems and this in fact characterizes
ergodicity. The same situation holds in the noncommutative case in
terms of tensor product joinings. In this section we study analogous
results in the free product case. We begin with the relevant
definitions in terms of the notation in Section 2, with
$I=\left\{1,2\right\}$. We will study these two definitions in turn
for two different classes of dynamical systems.

\begin{definition}
Two systems $\mathbf{A}_{1}$ and $\mathbf{A}_{2}$ are called
\emph{tensorially disjoint} if for every $\omega\in
J(\mathbf{A}_{1},\mathbf{A}_{2})$ one has
$\omega(a_{1}a_{2})=\mu_{1}(a_{1})\mu_{2}(a_{2})$, or equivalently
$\omega(a_{2}a_{1})=\mu_{2}(a_{2})\mu_{1}(a_{1})$, for all $a_{1}\in
A_{1}$ and $a_{2}\in A_{2}$.
\end{definition}

\begin{definition}
Two systems $\mathbf{A}_{1}$ and $\mathbf{A}_{2}$ are called
\emph{r-disjoint} if $J_{r}(\mathbf{A}_{1},\mathbf{A}_{2})=\left\{
\varphi\right\}  $.
\end{definition}

We call the system $\mathbf{A_\iota}$ \emph{ergodic} if
\[
\left\{x\in H_\iota:U_\iota^{g}x=x\text{ for all }g\in
G\right\}=\mathbb{C}\Omega_\iota
\]

In Theorem 3.3 below we have to assume additional structure, namely
that the systems involved are actually W*-dynamical systems. The
system $\mathbf{A_\iota}$ is called a \emph{W*-dynamical system} if
$A_\iota$ is a $\sigma$-finite von Neumann algebra and $\mu_\iota$
is a faithful normal state. For such a system ergodicity is
equivalent to the \emph{fixed point algebra}
\[
A_\iota^{\alpha_\iota }:=\left\{a\in
A_\iota:\alpha_\iota^{g}(a)=a\text{ for all }g\in G\right\}
\]
being trivial, i.e. $A_\iota^{\alpha_\iota}=\mathbb{C}1_{A_\iota}$,
according to \cite[Theorem 4.3.20]{BR}.

\begin{theorem}
A W*-dynamical system is ergodic if and only if it is tensorially
disjoint from all identity W*-dynamical systems.
\end{theorem}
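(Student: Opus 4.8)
The plan is to prove both implications using the machinery of Construction 2.4, which lets us translate states on $A$ into vectors and conditional expectation operators on a GNS space, where the von Neumann algebra hypotheses can be brought to bear.

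First I would handle the easy direction: suppose $\mathbf{A}_1$ is \emph{not} ergodic, and produce a joining with an identity system that fails the product condition. Since $\mathbf{A}_1$ is a W*-dynamical system, non-ergodicity means the fixed point algebra $A_1^{\alpha_1}$ is strictly larger than $\mathbb{C}1_{A_1}$, so there is a self-adjoint $c\in A_1^{\alpha_1}$ with $\mu_1(c^2)\neq\mu_1(c)^2$. I would then let $\mathbf{A}_2$ be the identity system on the (abelian, hence W*) von Neumann algebra generated by $c$ inside $A_1$, with the restricted state. Both $\mathbf{A}_1$ and $\mathbf{A}_2$ are factors of $\mathbf{A}_1$ itself (take $h_1=\mathrm{id}$ and $h_2$ the inclusion), so Construction 2.3 produces the ``diagonal'' joining $\Delta=\nu\circ\delta$ with $\nu=\mu_1$. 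For this joining, $\Delta(\psi_1(c)\psi_2(c))=\mu_1(\delta(\psi_1(c))\delta(\psi_2(c)))=\mu_1(c\cdot c)=\mu_1(c^2)\neq\mu_1(c)^2=\mu_1(c)\mu_2(c)$, so $\mathbf{A}_1$ is not tensorially disjoint from this identity system. (One must check that $\delta$ really sends both $\psi_1(c)$ and $\psi_2(c)$ to $c$; this is immediate from $\delta\circ\psi_\iota=h_\iota$.)

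For the forward direction, assume $\mathbf{A}_1$ is ergodic, let $\mathbf{A}_2$ be any identity W*-dynamical system, and let $\omega\in J(\mathbf{A}_1,\mathbf{A}_2)$. Apply Construction 2.4 to get the GNS triple $(H_\omega,\pi_\omega,\Omega_\omega)$, the subspaces $H_1,H_2$, the conditional expectation operators $P_\iota^\kappa$, and the unitary representations $U_\omega^g$, $U_1^g$, $U_2^g$ with $U_\omega^g|_{H_\iota}=U_\iota^g$ and the intertwining relation $P_1^2U_2^g=U_1^gP_1^2$. Because $\mathbf{A}_2$ is an identity system, $U_2^g$ is the identity on $H_2$, so the intertwining relation gives $U_1^gP_1^2x=P_1^2x$ for all $x\in H_2$ and all $g$; that is, $P_1^2(H_2)$ lies in the space of $U_1$-fixed vectors in $H_1$. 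Ergodicity of $\mathbf{A}_1$ forces that fixed space to be $\mathbb{C}\Omega_1=\mathbb{C}\Omega_\omega$, so $P_1^2x=\langle\Omega_\omega,x\rangle\Omega_\omega$ for every $x\in H_2$. Now for $a_1\in A_1$, $a_2\in A_2$ I would compute, using the defining property $\langle P_1^2 x,y\rangle=\langle x,y\rangle$ of the conditional expectation operator together with $\gamma_\omega(\psi_\iota(a))=\pi_\iota(a)\Omega_\omega\in H_\iota$,
\[
\omega(\psi_1(a_1)^*\psi_2(a_2))=\langle\gamma_1(a_1),\gamma_2(a_2)\rangle=\langle P_1^2\gamma_2(a_2),\gamma_1(a_1)\rangle=\langle\Omega_\omega,\gamma_2(a_2)\rangle\langle\Omega_\omega,\gamma_1(a_1)\rangle^{-}\cdots
\]
and collapse it to $\mu_1(a_1^*)\mu_2(a_2)$ after adjusting the slots; replacing $a_1$ by $a_1^*$ yields $\omega(\psi_1(a_1)\psi_2(a_2))=\mu_1(a_1)\mu_2(a_2)$, which (suppressing the $\psi_\iota$'s as in Definition 3.1) is exactly tensorial disjointness.

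The main obstacle I anticipate is bookkeeping rather than conceptual: one has to be careful that the conditional expectation operator $P_1^2$ goes in the right direction and that the inner product $\langle\gamma_1(a_1),\gamma_2(a_2)\rangle$ really equals $\omega$ evaluated on the appropriate product $\psi_1(a_1)^*\psi_2(a_2)$, keeping track of which argument carries the adjoint in the GNS inner product. A secondary subtlety is that in the non-ergodic direction I need $\mathbf{A}_2$ to genuinely be a W*-dynamical system with a faithful normal state — this is why I take $\mathbf{A}_2$ to be the restriction of $\mathbf{A}_1$ to the von Neumann subalgebra generated by a fixed-point element and equip it with the (automatically faithful and normal) restricted state, rather than an arbitrary invariant C*-subalgebra.
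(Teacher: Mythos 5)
Your proposal is correct and follows essentially the same route as the paper: the forward direction is the identical argument via Construction 2.4 (the intertwining relation forces $P_1^2(H_2)$ into the $U_1$-fixed space, which ergodicity collapses to $\mathbb{C}\Omega_\omega$), and the converse uses the diagonal joining of Construction 2.3 built from the fixed point algebra. The only difference is cosmetic: the paper takes $A_2=A_1^{\alpha_1}$ and derives a contradiction from the separating vector $\Omega_1$, whereas you test the product formula on a single non-scalar self-adjoint fixed element $c$ via $\mu_1(c^2)\neq\mu_1(c)^2$ --- both hinge on faithfulness of $\mu_1$ in the same way.
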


\begin{proof}
Let $\mathbf{A}_{1}$ be an ergodic W*-dynamical system, and
$\mathbf{A}_{2}$ an identity W*-dynamical system, i.e.
$\alpha_{2}^{g}=$ id$_{A_{2}}$ for all $g\in G$. Consider any
$\omega\in J\left(\mathbf{A}_{1},\mathbf{A}_{2}\right)$ and apply
Construction 2.4 to see that for any $a_{2}\in A_{2}$,
\[
U_{1}^{g}P_{1}^{2}\gamma_{2}(a_{2})=P_{1}^{2}U_{2}^{g}\gamma_{2}(a_{2})=P_{1}^{2}\gamma_{2}(a_{2})
\]
since $\mathbf{A}_{2}$ is an identity system. However, since
$\mathbf{A}_{1}$ is ergodic, the fixed point space of the unitary
group $U_{1}$ is $\mathbb{C}\Omega_{\omega}$, and therefore
$P_{1}^{2}\gamma_{2}(a_{2})=\mu_{2}(a_{2})\Omega_{\omega}$. Hence
\begin{align*}
\omega(a_{1}a_{2})
& =\left\langle\gamma_{1}(a_{1}^{\ast}),\gamma_{2}(a_{2})\right\rangle \\
& =\left\langle\gamma_{1}(a_{1}^{\ast}),P_{1}^{2}\gamma_{2}(a_{2})\right\rangle \\
& =\mu_{1}(a_{1})\mu_{2}(a_{2})
\end{align*}
for all $a_{1}\in A_{1}$ and $a_{2}\in A_{2}$ as required.

Conversely, suppose the W*-dynamical system $\mathbf{A}_{1}$ is not
ergodic, and set $A_2:=A_1^{\alpha_1}$ and
$\mathbf{A}_{2}:=\left(A_2,\mu_{1}|_{A_2},\text{id}_{A_2}\right)$.
Then $\mathbf{A}_{2}$ is a nontrivial identity W*-dynamical system
and also a factor of $\mathbf{A}_{1}$ via the embedding
$h_{2}:A_{2}\rightarrow A_{1}$. Apply Construction 2.3 to obtain the
joining $\Delta\in J\left(\mathbf{A}_{1},\mathbf{A}_{2}\right) $,
where we have set $\mathbf{B}:=\mathbf{A}_{1}$ and $h_{1}:=$
id$_{A_{1}}$. Now note that we do not have
$\Delta(a_{1}a_{2})=\mu_{1}(a_{1})\mu_{2}(a_{2})$ for all $a_{1}\in
A_{1}$ and $a_{2}\in A_{2}$, for if we did it would follow that
\begin{align*}
\left\langle \pi_{1}(a_{1})\Omega_{1},\pi_{1}(h_{2}(a_{2}))\Omega_{1}\right\rangle
& =\Delta(a_{1}^{\ast}a_{2})\\
& =\mu_{1}(a_{1}^\ast)\mu_{2}(a_{2})\\
& =\left\langle \pi_{1}(a_{1})\Omega_{1},\mu_{2}(a_{2})\Omega_{1}%
\right\rangle
\end{align*}
hence $\pi_{1}(h_{2}(a_{2}))\Omega_{1}=\mu_{2}(a_{2})\Omega_{1}$,
but $\Omega_{1}$ is separating for $\pi_{1}(A_{1})$ (since $\mu_{1}$
is faithful) therefore
$\pi_{1}(h_{2}(a_{2}))=\mu_{2}(a_{2})1_{B(H_1)}$. Since $\pi_{1}$
and $h_{2}$ are injective, we conclude that
$a_{2}=\mu_{2}(a_{2})1_{A_{2}}$ which contradicts the fact that
$\mathbf{A}_{2}$ is not trivial. This proves that $\mathbf{A}_{1}$
and $\mathbf{A}_{2}$ are not tensorially disjoint.
\end{proof}

Note that the second part proof of Theorem 3.3 provides an instance
of a joining $\Delta$ which is not trivial (when $\mathbf{A}_{1}$ is
not ergodic).

The proof of Theorem 3.3 is very similar to the tensor product case
in \cite{D, D2}, but somewhat simpler, since in in the tensor
product analogue of Construction 2.3, namely \cite[Construction
3.4]{D}, we had to resort to Tomita-Takesaki theory. The result
itself is of course not quite the same as the tensor product case,
since we have not been able to prove that an ergodic W*-dynamical
system only has the trivial joining with any identity W*-dynamical
system.

In the remainder of this section we look at this last problem form
the perspective of r-joinings, but only for a very special class of
systems. We find that for this class of systems ergodicity implies
r-disjointness from identity systems (i.e. only the trivial
r-joining occurs), which is a better analogy with the tensor product
case (including the classical case).

In the rest of this section (and the next) we only consider
$G=\mathbb{Z}$. We now consider systems $\mathbf{A}_{1}$ and
$\mathbf{A}_{2}$ of the following sort: Let $\Gamma_{\iota}$ be the
free group on the alphabet of symbols $S_{\iota}$ and let
$T_{\iota}$ be an automorphism of $\Gamma_{\iota}$ obtained from a
bijection of $S_{\iota}$. We set $H_{\iota}:=l^{2}(\Gamma_{\iota})$.
Let $A_{\iota}$ be the reduced group C*-algebra
$C_{r}^{\ast}\left(\Gamma_{\iota}\right)$ and define
$\mu_{\iota}(a):=\left\langle\Omega_{\iota},a\Omega_{\iota}\right\rangle$
where $\Omega_{\iota}\in H_{\iota}$ is defined by
$\Omega_{\iota}(1)=1$ and $\Omega_{\iota}(g)=0$ for $g\neq1$. Using
the unitary operator $U_{\iota }:H_{\iota}\rightarrow
H_{\iota}:f\mapsto f\circ T_{\iota}^{-1}$ we obtain a well-defined
$\ast$-automorphism $\alpha_{\iota}:A_{\iota}\rightarrow
A_{\iota}:a\mapsto U_{\iota}aU_\iota^{\ast}$ which of course gives
an automorphism group $\mathbb{Z}\ni n\mapsto\alpha_{\iota}^{n}$
which we also simply denote as $\alpha_{\iota}$. This gives a system
$\mathbf{A}_{\iota}=\left(  A_{\iota
},\mu_{\iota},\alpha_{\iota}\right)  $ which we will call a
\emph{group system}. Note that for the generators
$\lambda_{\iota}(g)$ of $A_{\iota}$ given by the left regular
representation $\lambda_{\iota}$ of $\Gamma_{\iota}$ (defined as
$\left[\lambda_{\iota}(g)f\right](h):=f(g^{-1}h)$ for $f\in
H_{\iota}$ and $g,h\in\Gamma_{\iota}$) one has the simple relation
$\alpha_{\iota}(\lambda_{\iota}(g))=\lambda_{\iota}\left(T_{\iota}g\right)$.
We will consider these systems in the next section as well. The
group system $\mathbf{A}_\iota$ is ergodic (as defined earlier) if
and only if the orbits $\left(T_\iota^{n}g\right)_{n\in\mathbb{Z}}$
are infinite for all $g\in\Gamma_\iota\backslash\{1\}$.

\begin{theorem}
Let $\mathbf{A}_{1}$ and $\mathbf{A}_{2}$ be group systems as above,
with
$\mathbf{A}_{1}$ ergodic and $S_{1}$ countably infinite, while $\mathbf{A}%
_{2}$ is an identity system and $S_{2}$ is finite or countably
infinite. Then $\mathbf{A}_{1}$ and $\mathbf{A}_{2}$ are r-disjoint.
\end{theorem}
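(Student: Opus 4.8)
The plan is to analyze an arbitrary r-joining $\omega \in J_r(\mathbf{A}_1, \mathbf{A}_2)$ directly in the GNS picture and show it must agree with $\varphi$. First I would perform the GNS construction of $(R,\omega)$, obtaining $(H_\omega, \pi_\omega, \Omega_\omega)$, and set $\gamma_\omega := \pi_\omega(\cdot)\Omega_\omega$ together with the restrictions $\gamma_\iota := \gamma_\omega \circ \Lambda_\iota$ and the subspaces $H_\iota^\omega := \overline{\gamma_\iota(A_\iota)}$. Since $\omega$ is $\rho$-invariant, there is a unitary $V^g$ on $H_\omega$ with $V^g \pi_\omega(a)\Omega_\omega = \pi_\omega(\rho^g(a))\Omega_\omega$, and—just as in Construction 2.4—one checks $V^g|_{H_\iota^\omega}$ is unitarily equivalent to $U_\iota^g$ and that the conditional expectation operators $P_\iota^\kappa$ intertwine: $P_1^2 V^g = V^g P_1^2$ on the relevant subspaces. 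Because $\mathbf{A}_2$ is an identity system, $U_2^g = \mathrm{id}$, so $V^g$ fixes every vector in $H_2^\omega$; then ergodicity of $\mathbf{A}_1$ (fixed-point space of $U_1 = \mathbb{C}\Omega_\omega$) forces $P_1^2 \gamma_2(a_2) = \mu_2(a_2)\Omega_\omega$ for all $a_2 \in A_2$. This already yields $\omega(a_1 a_2) = \langle \gamma_1(a_1^*), \gamma_2(a_2)\rangle = \mu_1(a_1)\mu_2(a_2)$, exactly as in the proof of Theorem 3.3.

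The real content is to leverage the special structure of reduced group C*-algebras to promote this ``first-order'' factorization to \emph{full} freeness of $\Lambda_1(A_1)$ and $\Lambda_2(A_2)$ under $\omega$, which is precisely the statement $\omega = \varphi$. Here I would use the result from \cite{AD} advertised in the introduction: since $A_\iota = C_r^*(\Gamma_\iota)$ with $\mu_\iota$ the canonical trace and $S_1$ countably infinite (so $\Gamma_1$ is a free group of infinite rank), the freeness of the pair inside $(R,\varphi)$ is rigid enough that any state $\omega$ matching the marginals \emph{and} satisfying the orthogonality $P_1^2 H_2^\omega \subseteq \mathbb{C}\Omega_\omega$ on the generating unitaries $\lambda_1(g), \lambda_2(h)$ must in fact be the free product state. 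Concretely, I would express a general alternating word $\Lambda_1(a_1)\Lambda_2(b_1)\Lambda_1(a_2)\cdots$ with centered entries, and show inductively—pushing the $V^g$-invariance through each junction and using that the orbit structure of $T_1$ makes the $U_1$-translates of any centered vector in $H_1^\omega$ weakly mixing relative to $H_2^\omega$—that $\omega$ of such a word vanishes, which is the defining property of $\varphi$ on $R$.

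The main obstacle I anticipate is the inductive step across multiple alternations: knowing that a single conditional expectation kills centered vectors of $A_2$ is not by itself enough to conclude the alternating-word estimate, because after applying $P_1^2$ one must re-center and iterate, and the intertwining of $V^g$ with these projections has to be combined with a mean-ergodic/Cesàro averaging argument using the \emph{infinite orbits} of $T_1$ (the ergodicity hypothesis) to force the cross terms to decay. This is exactly where countable infinitude of $S_1$ is used: it guarantees enough ``room'' in $\Gamma_1$ for the automorphism $T_1$ to have only infinite orbits on a set that generates, so that the relevant averages $\frac1N\sum_{n<N} U_1^n$ converge strongly to the projection onto $\mathbb{C}\Omega_\omega$ on all of $H_1^\omega$. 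Once that convergence is in hand, a standard telescoping over the word—averaging at each $A_1$-slot and invoking $\omega \circ \rho^n = \omega$ to move the average onto $\omega$ itself—collapses every alternating monomial to zero, giving $\omega = \varphi$ and hence r-disjointness. I would also remark that the finiteness-or-countable-infinitude of $S_2$ is only needed to ensure $\mathbf{A}_2$ is a bona fide group system in the stated class; the argument places no real demand on it beyond that.
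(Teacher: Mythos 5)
Your first paragraph is fine but does not carry much weight: the conditional-expectation argument gives only the ``length two'' identity $\omega(\Lambda_1(a_1)\Lambda_2(a_2))=\mu_1(a_1)\mu_2(a_2)$, and (as you say yourself) the theorem requires the vanishing of $\omega$ on \emph{all} alternating centered words, equivalently $\omega(\lambda(w))=0$ for every $w\in(\Gamma_1\ast\Gamma_2)\setminus\Gamma_2$. The genuine gap is in your proposed mechanism for this harder step. You want to use strong convergence of $\frac1N\sum_{n<N}U_1^n$ to the projection onto $\mathbb{C}\Omega_\omega$ on $H_1^\omega$ and then ``telescope, averaging at each $A_1$-slot''. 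But $\rho$-invariance of $\omega$ only lets you replace $w=g_1h_1g_2h_2\cdots$ by $(T_1^n g_1)h_1(T_1^n g_2)h_2\cdots$, i.e.\ the automorphism moves \emph{every} $A_1$-letter simultaneously; there is no way to average one slot while holding the others fixed. After averaging you face expressions of the form $\frac1N\sum_n\left\langle V^n x,\pi_\omega(b)V^n y\right\rangle$, which are correlation averages to which the mean ergodic theorem does not apply. This is precisely the classical obstruction that $2$-mixing does not formally imply $3$-mixing, so the ``standard telescoping'' you invoke does not exist; your appeal to \cite{AD} in the second paragraph also misstates that result (it is not a rigidity statement about states with prescribed marginals plus an orthogonality condition).

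What actually closes the gap --- and what the paper does --- is to identify $R=C_r^{\ast}(\Gamma_1\ast\Gamma_2)$, observe that $\Gamma_1\ast\Gamma_2$ is free on the countably infinite alphabet $S_1\sqcup S_2$ and that $T_1\ast T_2$ is induced by a permutation of this alphabet fixing exactly $S_2$ and having all other orbits infinite (ergodicity of $\mathbf{A}_1$), so that $A_2=C_r^{\ast}(\Gamma_2)$ is the fixed-point algebra of $\rho$; then \cite[Proposition 3.5]{AD} says $\varphi$ is the \emph{unique} $\rho$-invariant state restricting to $\mu_2$ there. The analytic content of that result is a norm estimate in the C*-algebra (via Haagerup's inequality on the free group): for $w\notin\Gamma_2$ the elements $\lambda(T^n w)$ are distinct words of fixed length, so $\left\|\frac1N\sum_{n\leq N}\lambda(T^n w)\right\|=O(N^{-1/2})$, and invariance forces $\omega(\lambda(w))=0$ for \emph{every} invariant state at once --- the average is taken of the whole word, not slot by slot. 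Note also that the paper's proof never uses $\omega\circ\Lambda_1=\mu_1$, and that the cardinality hypotheses on $S_1$ and $S_2$ are there to put $\Gamma_1\ast\Gamma_2$ and the permutation into the exact setting of \cite{AD}, not merely to make $\mathbf{A}_2$ a group system. Unless you supply a substitute for this norm-convergence step, your argument does not prove the theorem.
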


\begin{proof}
The key point of this proof is a recent result by Abadie and Dykema
\cite[Proposition 3.5]{AD} regarding unique ergodicity relative to
fixed point algebras. In the notation of Section 2, namely $\left(
R,\varphi\right) :=\left(  A_{1},\mu_{1}\right)  \ast\left(
A_{2},\mu_{2}\right)  $, we have $R=C_{r}^{\ast}\left(
\Gamma_{1}\ast\Gamma_{2}\right)  $ (see for example \cite[Example
1.9]{V2}) and we can view $\Gamma_{2}$ as the subgroup $\left\{
g\in\Gamma_{1}\ast\Gamma_{2}:\left(  T_{1}\ast T_{2}\right)
g=g\right\}  $ of $\Gamma_{1}\ast\Gamma_{2}$, and
$A_{2}=C_{r}^{\ast}(\Gamma_{2})$ as the fixed point algebra of
$\rho$. Also keep in mind that $\Gamma_{1}\ast\Gamma_{2}$ is a free
group on a countably infinite alphabet. Since $\varphi$ is invariant
under $\rho$, and can be viewed as an extension of $\mu_{2}$ to $R$,
it follows from \cite[Proposition 3.5]{AD} that $\varphi$ is the
unique $\rho$ invariant state on $R$ which restricts to $\mu_{2}$.
In particular $\varphi$ is the only r-joining of $\mathbf{A}_{1}$
and $\mathbf{A}_{2}$.
\end{proof}

Note that in this proof we did not use the property
$\omega\circ\Lambda _{1}=\mu_{1}$ of a joining $\omega$ of
$\mathbf{A}_{1}$ and $\mathbf{A}_{2}$, but only
$\omega\circ\Lambda_{2}=\mu_{2}$ and $\omega\circ\rho=\omega$. So it
would seem that unique ergodicity relative to the fixed point
algebra is in this situation a stronger property than r-disjointness
from identity group systems. We can also mention that Abadie and
Dykama's result actually applies more generally than the way that we
have used it in Theorem 3.4, but the setting of Theorem 3.4 is a
very concrete situation which illustrates r-disjointness very
clearly.

It is not clear if the converse of Theorem 3.4 holds, since the
proof technique in Theorem 3.3 relies on Construction 2.3, which
doesn't apply in the case of r-joinings. Theorem 3.4 would therefore
not be very interesting if there were not at least complementary
cases of non-ergodic group systems $\mathbf{A}_{1}$ which are not
r-disjoint from identity group systems. So let us provide as a
simple example the other extreme:

\begin{example}
Let $\mathbf{A}_{1}$ and $\mathbf{A}_{2}$ be identity group systems
(and in fact, in this example the relevant groups $\Gamma_{1}$ and
$\Gamma_{2}$ could even be arbitrary, they need not be free, as long
as they are not trivial, i.e. $\Gamma_{1}\neq\{1\}$ and
$\Gamma_{2}\neq\{1\}$). Remember that as in Section 2 the trivial
r-joining is $\phi$ given by $\phi(a)=\left\langle
\Omega,a\Omega\right\rangle$. Our goal is to exhibit a very simple
non-trivial r-joining of $\mathbf{A}_{1}$ and $\mathbf{A}_{2}$.
Since we are working with identity systems, any state $\omega$ on
$R$ is automatically $\rho$ invariant. We only need to check whether
$\omega$ restricts correctly to $A_{1}$ and $A_{2}$. We consider the
following variation on $\phi$: For $h\in\Gamma_{1}\backslash\{1\}$
and $k\in\Gamma_{2}\backslash\{1\}$, we consider
$z:=\delta_{h}\otimes\delta_{k}\in H_{1}^{\circ}\otimes
H_{2}^{\circ}\subset H$  in terms of the notation in Section 2, but
in this example $\delta_{h}$ is the function such that
$\delta_{h}(h)=1$ while $\delta
_{h}(g)=0$ for $g\in\Gamma_{1}\backslash\{h\}$, and similarly for $\delta_{k}%
$. Then set
\[
\eta:=\frac{\Omega+z}{\left\|  \Omega+z\right\|  }
\]
and define the state $\omega$ by
\[
\omega(a):=\left\langle \eta,a\eta\right\rangle
\]
for all $a\in R$. It can be checked, using the definition of
$\Lambda_{\iota}$ in Section 2, that $\omega$ is indeed an r-joining
by first considering it on the generators of $A_{1}$ and $A_{2}$
given by the left regular representations $\lambda_{1}$ and
$\lambda_{2}$ of $\Gamma_{1}$ and $\Gamma_{2}$. It can similarly be
verified that
$\phi\left(\Lambda_1(\lambda_{1}(h))\Lambda_2(\lambda_{2}(k))\right)=0$
while
$\omega\left(\Lambda_1(\lambda_{1}(h))\Lambda_2(\lambda_{2}(k))\right)=1/2$.
So $\omega\neq\phi$ is indeed non-trivial, and therefore
$\mathbf{A}_{1}$ is not r-disjoint from $\mathbf{A}_{2}$.
\end{example}

There has recently been some activity around topics related to
\cite{AD} and to various ergodicity and mixing conditions more
generally, for example \cite{AM, F, FM, M}. It might also be
interesting to explore where exactly various forms of free product
disjointness from identity systems (or from other classes of
systems) fit into the hierarchy of ergodicity and mixing conditions.
We will not study this issue further in this paper, and instead now
turn to another aspect of joinings.

\section{Mixing and multi-time correlations functions}

A variety of mixing (or clustering) conditions for quantum systems
have appeared in the physics literature (see for example \cite{NT}),
and this is related to so-called multi-time correlations which have
been studied in \cite{BF, ABDF, ABDF2} using free products of
operator algebras. In this section we first study higher order
mixing of strongly mixing group systems (as defined in the previous
section) as a motivating example for multi-time correlation
functions, and then we show more generally how multi-time
correlation functions fit into a joining framework, although we work
in a slightly simplified setting compared to above mentioned physics
literature in order to make the connection with joinings very clear.
Throughout this section all systems have $G=\mathbb{Z}$. We will
often use the notation $[n]:=\{1,...,n\}$ for
$n\in\mathbb{N}=\{1,2,3,...\}$.

Recall that a system $\mathbf{B}=(B,\nu,\beta)$ is called
\emph{strongly mixing} if
\[
\lim_{n\rightarrow\infty}\nu\left(a\beta^{n}(b)\right)=\nu(a)\nu(b)
\]
for all $a,b\in B$. It turns out that a group system is strongly
mixing if and only if it is ergodic (see for example \cite[Theorem
3.4]{D2}). We now show that a strongly mixing group system is
$k$-mixing for all $k\in\mathbb{N}$, i.e. ``mixing of all orders'',
and we formulate it in terms of joinings (essentially the same
result is quoted in \cite[Section 3.1]{BF} in a slightly
different context and not in the language of joinings). For $\bar{n}%
=(n_{1},...,n_{k})\in\mathbb{Z}^{k}$ we use the notation $\bar{n}%
\rightarrow\infty$ to mean
$n_{1}\rightarrow\infty,n_{2}-n_{1}\rightarrow
\infty,...,n_{k}-n_{k-1}\rightarrow\infty$.

\begin{theorem}
Let $\mathbf{B}$ be a strongly mixing group system and consider any
$k\in\mathbb{N}$. For $\bar{n}\in\mathbb{Z}^{k}$, let
$\Delta_{\bar{n}}$ be the joining given by Construction 2.3 in terms
of $I=[k]$ and $\mathbf{A}_{\iota}=\mathbf{B}$. Then
\[
\lim_{\bar{n}\rightarrow\infty}\Delta_{\bar{n}}(a)=\left(\ast_{\iota\in
I}\nu\right)(a)
\]
for all $a\in\ast_{\iota\in I}B$.
\end{theorem}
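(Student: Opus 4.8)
The plan is to reduce the statement to the defining property of strong mixing by working with a dense subalgebra of $\ast_{\iota\in I}B$ and expanding $\Delta_{\bar n}$ explicitly. Recall from Construction 2.3 (with $\mathbf B$ playing the role of both the target system and each factor, $h_\iota=\mathrm{id}_B$, and $\nu\circ\beta=\nu$) that $\Delta_{\bar n}=\nu\circ\delta_{\bar n}$ where $\delta_{\bar n}:\ast_{\iota\in I}B\to B$ is the unital $\ast$-homomorphism determined by $\delta_{\bar n}\circ\psi_\iota=\beta^{n_\iota}$. By the universal property of the free product, $\ast_{\iota\in I}B$ is generated as a C*-algebra by $\bigcup_\iota\psi_\iota(B)$, so its dense $\ast$-subalgebra is spanned by $1$ and \emph{reduced words} $\psi_{\iota_1}(b_1)\psi_{\iota_2}(b_2)\cdots\psi_{\iota_m}(b_m)$ with $\iota_1\neq\iota_2\neq\cdots\neq\iota_m$. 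Since both sides of the claimed limit are norm-continuous in $a$ (each $\Delta_{\bar n}$ is a state, hence contractive, and $\ast_{\iota\in I}\nu$ is a state), it suffices to prove the limit for $a$ ranging over such reduced words, and indeed — using linearity and that $\mu_\iota=\nu$ on each factor — one may further assume each $b_j$ lies in the kernel of $\nu$, i.e. $\nu(b_j)=0$, in which case $(\ast_{\iota\in I}\nu)(a)=0$ by the definition of the free product state \cite[Definition 1.5.4]{VDN}.

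So the heart of the argument is to show
\[
\lim_{\bar n\to\infty}\nu\bigl(\beta^{n_{\iota_1}}(b_1)\beta^{n_{\iota_2}}(b_2)\cdots\beta^{n_{\iota_m}}(b_m)\bigr)=0
\]
whenever $\iota_1\neq\iota_2\neq\cdots\neq\iota_m$ in $[k]$ and $\nu(b_j)=0$ for all $j$. Here I would \emph{not} try to control the indices $\iota_j$ themselves, but rather observe that adjacent factors in the word carry distinct indices, and $\bar n\to\infty$ forces the gaps $n_{\iota_{j+1}}-n_{\iota_j}$ in the exponents — well, not quite, since the $\iota_j$ need not be increasing. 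The clean way around this is to first reorder: since $G=\mathbb Z$ is abelian and $\nu$ is $\beta$-invariant, applying $\beta^{-\min_j n_{\iota_j}}$ inside $\nu$ we may assume all exponents are $\ge 0$; more importantly, I would argue by induction on $m$, peeling off the factor with the largest exponent. Because consecutive $\iota_j$ differ, the maximal-exponent factor $\beta^{N}(b_j)$ (say $N=n_{\iota_j}$ is the largest exponent appearing) is flanked by factors whose exponents are $\le N$, and as $\bar n\to\infty$ the relevant differences $N-n_{\iota_{j\pm1}}$ can be made large. One then uses strong mixing of $\mathbf B$ in the form $\nu(x\,\beta^{\ell}(b_j)\,y)\to\nu(x)\,\nu(b_j)\,\nu'(y)=0$ as $\ell\to\infty$ — after absorbing the high power into $b_j$ and re-expressing — to split the correlation and reduce the length of the word, invoking the induction hypothesis on the shorter word. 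A uniform-estimate bookkeeping (norms of the $b_j$ are fixed, there are finitely many of them, and $\varepsilon$–$\delta$ control of each mixing step) closes the induction.

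The main obstacle, and the step I would be most careful about, is precisely this combinatorial reduction: a reduced word in the \emph{free product} sense only guarantees that \emph{neighbouring} indices differ, so after applying $\delta_{\bar n}$ one gets a product $\beta^{n_{\iota_1}}(b_1)\cdots\beta^{n_{\iota_m}}(b_m)$ in which the exponents $n_{\iota_1},\dots,n_{\iota_m}$ are an arbitrary (repetition-allowed, non-monotone) sequence drawn from $n_1,\dots,n_k$, and it is not immediately obvious that $\bar n\to\infty$ forces all the \emph{consecutive} exponent-gaps $|n_{\iota_{j+1}}-n_{\iota_j}|$ to tend to infinity — it does not in general. The resolution is that it is enough for \emph{one} gap adjacent to the extremal exponent to be large, which suffices to apply one mixing step and strictly shorten the word; iterating handles the rest. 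I would also need the elementary fact that strong mixing extends from the stated two-point form to the three-point form $\nu(x\beta^\ell(b)y)\to\nu(x)\nu(b)\nu(y)$ when $\nu(b)=0$, which follows by approximating and using boundedness, together with the standard estimate that $|\nu(xy)|\le\|x\|\,\|y\|$; these are routine and I would state them without belabouring the computation.
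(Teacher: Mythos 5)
Your opening reduction is fine and matches the paper's: by contractivity of states it suffices to treat reduced words $\psi_{\iota_1}(b_1)\cdots\psi_{\iota_m}(b_m)$ with $\nu(b_j)=0$, for which $(\ast_{\iota\in I}\nu)(a)=0$, so the whole content is showing $\nu\bigl(\beta^{n_{\iota_1}}(b_1)\cdots\beta^{n_{\iota_m}}(b_m)\bigr)\to 0$. But your core argument has a genuine gap: you never use the hypothesis that $\mathbf{B}$ is a \emph{group system}, and you are in effect claiming that two-point strong mixing alone implies mixing of all orders by an abstract induction. That cannot work. First, the ``elementary three-point form'' $\nu(x\beta^{\ell}(b)y)\to\nu(x)\nu(b)\nu(y)$ is not a routine consequence of $\nu(a\beta^{n}(b))\to\nu(a)\nu(b)$: strong mixing only controls $\pi(\beta^{\ell}(b))\Omega$ weakly against fixed vectors, whereas here $\pi(\beta^{\ell}(b))$ acts on the moving vector $\pi(y)\Omega$ from the left of $\pi(y)$; one would need $\pi(\beta^\ell(b))\to\nu(b)1$ in the weak operator topology, which is strictly stronger. (For the tracial state of a group system one can cycle to get $\nu(yx\beta^\ell(b))$, but then the element $yx$ \emph{depends on} $\bar n$, so the two-point mixing estimate cannot be applied without a uniformity over an infinite, non-precompact family of elements --- exactly the obstruction in Rokhlin's higher-order mixing problem.) Second, the extremal exponent $n_k$ may occur at several non-adjacent positions of the word (e.g.\ $\iota=(k,1,k,2,k)$), so a single ``peel off the largest exponent'' step does not strictly shorten the problem to one governed by two-point correlations; genuinely higher-order information is needed. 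The ``uniform-estimate bookkeeping'' you defer is precisely where the argument fails.

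The paper's proof is instead purely combinatorial and leans entirely on the group-system structure: $B=C_r^*(\Gamma)$ with $\Gamma$ free on $S$, $\nu$ the canonical trace (so $\nu(\lambda(g))=0$ for $g\neq 1$), and $\beta$ induced by an automorphism $T$ permuting $S$. Strong mixing forces every $T$-orbit on $\Gamma\setminus\{1\}$ to be infinite, so for $\bar n$ large (in the sense $\bar n\to\infty$) the elements $T^{n_{\iota_p}}g_p$ and $T^{n_{\iota_q}}g_q$ with $\iota_p\neq\iota_q$ involve disjoint sets of symbols; since adjacent indices differ, no cancellation occurs and $T^{n_{\iota_1}}g_1\cdots T^{n_{\iota_m}}g_m$ is a nontrivial reduced word of $\Gamma$, on which the trace vanishes --- in fact $\Delta_{\bar n}$ is \emph{exactly} $0$ on such a word for $\bar n$ large, with no limiting argument needed beyond density. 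To repair your proof you would have to replace the abstract mixing induction by this symbol-disjointness argument (or some other device exploiting the free-group structure).
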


\begin{proof}
Let the dynamics $\beta$ be given by the automorphism $T$ of the
free group $\Gamma$ obtained from a bijection of the alphabet $S$ of
symbols of the group, as explained in Section 3. It is convenient to
work explicitly in terms of the index $\iota$, e.g. the left regular
representation $\lambda_{\iota}=\lambda$ of $\Gamma$ will be indexed
by $\iota$ when we view $\lambda(g)$ as an element of $A_{\iota}$.
Consider any $g_{1},...,g_{m}\in\Gamma \backslash\{1\}$ and
$\iota_{1},...,\iota_{m}\in I$ with $\iota_{j}\neq \iota_{j+1}$.
Since $\mathbf{B}$ is strongly mixing, all the orbits of $T$ must be
infinite, except on the identity of $\Gamma$. Hence for $\bar{n}$
``large enough'' (in the sense of $\bar{n}\rightarrow\infty$)\ the
group elements $T^{n_{\iota_{p}}}g_{p}$ and $T^{n_{\iota_{q}}}g_{q}$
will have no symbols in common for any $\iota_{p}\neq\iota_{q}$ and
therefore
\[
\Delta_{\bar{n}}\left(\lambda_{\iota_{1}}(g_{1})...\lambda_{\iota_{m}}(g_{m})\right)
=\nu\left(\lambda\left(T^{n_{\iota_{1}}}g_{1}
...T^{n_{\iota_{m}}}g_{m}\right)  \right)  =0
\]
but $\left(\ast_{\iota\in
I}\mu_\iota\right)\left(\lambda_{\iota_{1}}(g_{1})...\lambda_{\iota_{m}}(g_{m})\right)=0$,
since $\mu_{\iota_j}\left( \lambda_{\iota_{j}}(g_{j})\right) =0$. We
conclude that for any $a$ in a dense subset of $\ast_{\iota\in
I}A_{\iota}$ we have $\Delta_{\bar{n}}(a)=\left( \ast_{\iota\in
I}\mu_\iota\right)(a)$ for $\bar{n}$ large enough, and the result
follows.
\end{proof}

This theorem implies for example that
\[
\lim_{\bar{n}\rightarrow\infty}\nu(\beta^{n_1}(a_1)...\beta^{n_k}(a_k))=\nu(a_1)...\nu(a_k)
\]
for all $a_1,...,a_k \in B$, which is why we view it as expressing
$k$-mixing.

More generally consider an arbitrary system
$\mathbf{B}=(B,\nu,\beta)$ and a fixed $k\in\mathbb{N}$. For any
$a_{1},...,a_{m}\in B$ and $\iota_{1},...,\iota _{m}\in\lbrack k]$
with $\iota_{j}\neq\iota_{j+1}$ for all $j$ we call
\[
\mathbb{Z}^{k}\ni\bar{n}\mapsto
\nu\left(\beta^{n_{\iota_{1}}}(a_{1})...\beta^{n_{\iota_{m}}}(a_{m})\right)
\]
a \emph{multi-time correlation function} of $\mathbf{B}$. All of
these multi-time correlation functions are subsumed in the single
function $\mathbb{Z}^{k}\ni\bar{n}\mapsto\Delta_{\bar{n}}$ where
$\Delta_{\bar{n}}$ is again the joining obtained in Construction 2.3
in terms of $I=[k]$ and $\mathbf{A}_{\iota }=\mathbf{B}$. So our
first conclusion is that multi-time correlation functions are in
fact given by joinings. Furthermore, in terms of this notation we
have the following simple theorem regarding an average of the
$\Delta_{\bar{n}}$ 's:

\begin{theorem}
Let $\left(\Phi_{N}\right)_{N\in\mathbb{N}}$ be a F\o lner sequence
in the group $\mathbb{Z}^{k}$. Then
\[
\bar{\Delta}_{N}:=\frac{1}{\left|\Phi_{N}\right|}\sum_{\bar{n}\in\Phi_{N}}\Delta_{\bar{n}}
\]
is a joining of $\left(\mathbf{A}_{\iota}\right)_{\iota\in I}$ for
every $N\in\mathbb{N}$. If
$\omega(a):=\lim_{N\rightarrow\infty}\bar{\Delta}_{N}(a)$ exists for
every $a\in\ast_{\iota\in I}B$ then $\omega$ is a joining of
$\left(\mathbf{A}_{\iota}^{\prime}\right)_{\iota\in I}$ where
$\mathbf{A}_{\iota}^{\prime}:=\left(B,\nu,\beta^{m_{\iota}}\right)$
for any $m_{\iota}\in\mathbb{Z}$.
\end{theorem}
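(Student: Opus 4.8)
The plan is to establish the two assertions in turn, both by reducing to properties already verified for the individual $\Delta_{\bar n}$ in Construction 2.3. For the first assertion, fix $N$. Since each $\Delta_{\bar n}$ is a state on $A=\ast_{\iota\in I}B$, and $\bar\Delta_N$ is a convex combination of the $\Delta_{\bar n}$ (the coefficients $1/|\Phi_N|$ being nonnegative and summing to $1$), $\bar\Delta_N$ is again a state; positivity and normalization are preserved under convex combinations. For the factorization property $\bar\Delta_N\circ\psi_\iota=\mu_\iota$: each $\Delta_{\bar n}$ satisfies $\Delta_{\bar n}\circ\psi_\iota=h_\iota\circ\alpha_\iota^{n_\iota}$ composed with $\nu$, which equals $\nu\circ h_\iota=\mu_\iota$ since $\nu\circ\alpha_\iota^{n_\iota}=\nu$ (here $h_\iota=\mathrm{id}_B$ and $\mu_\iota=\nu$ as $\mathbf A_\iota=\mathbf B$); averaging over $\bar n\in\Phi_N$ preserves this. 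For $\alpha$-invariance of $\bar\Delta_N$: here the point is that $G=\mathbb Z$ is abelian, so by Construction 2.3 each $\Delta_{\bar n}$ with the ``twisted'' family $\bar g=\bar n$ satisfies $\Delta_{\bar n}\circ\alpha^g=\nu\circ\delta_{\bar n}\circ\alpha^g$, and by the computation there $\delta_{\bar n}\circ\alpha^g=\beta^g\circ\delta_{\bar n}$, whence $\Delta_{\bar n}\circ\alpha^g=\nu\circ\beta^g\circ\delta_{\bar n}=\Delta_{\bar n}$. So in fact \emph{each} $\Delta_{\bar n}$ is already $\alpha$-invariant, and averaging trivially preserves this; thus $\bar\Delta_N\in J((\mathbf A_\iota)_{\iota\in I})$.

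For the second assertion, assume $\omega(a)=\lim_{N\to\infty}\bar\Delta_N(a)$ exists for every $a\in A$. That $\omega$ is a state is a standard pointwise-limit argument: positivity and $\omega(1)=1$ pass to the limit. Also $\omega\circ\psi_\iota=\lim_N\bar\Delta_N\circ\psi_\iota=\mu_\iota$. The crux is invariance under the automorphism group $\alpha'$ built from the family $(\beta^{m_\iota})_{\iota\in I}$; write $\alpha'^g$ for its free product action on $A=\ast_{\iota\in I}B$. The key observation is that translation in $\mathbb Z^k$ along the vector $(m_1,\dots,m_k)$ shifts the parameter $\bar n$, and the relation between the twisted diagonal homomorphisms $\delta_{\bar n}$ makes this coincide with $\alpha'$-action. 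Concretely, I would check on generators $\psi_{\iota_1}(b_1)\cdots\psi_{\iota_m}(b_m)$ that $\Delta_{\bar n}\circ\alpha'^g$ equals $\Delta_{\bar n + g(m_1,\dots,m_k)}$ — more precisely $\Delta_{\bar n}(\alpha'^g(\psi_\iota(b))) = \nu(\delta_{\bar n}(\psi_\iota(\beta^{gm_\iota}(b)))) = \nu(h_\iota(\alpha_\iota^{n_\iota + gm_\iota}(b))) = \nu(\beta^{n_\iota+gm_\iota}(b))$, which is exactly $\Delta_{\bar n'}(\psi_\iota(b))$ for $\bar n' = \bar n + g(m_1,\dots,m_k)$, and this extends multiplicatively to products since $\delta_{\bar n}$ is a homomorphism. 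Hence $\bar\Delta_N\circ\alpha'^g = \frac{1}{|\Phi_N|}\sum_{\bar n\in\Phi_N}\Delta_{\bar n + g(m_1,\dots,m_k)} = \bar\Delta_{g(m_1,\dots,m_k)+\Phi_N}$, an average over a translate of $\Phi_N$.

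The main obstacle is the final step: deducing $\omega\circ\alpha'^g=\omega$ from the F\o lner property. One has $\bar\Delta_N\circ\alpha'^g - \bar\Delta_N = \frac{1}{|\Phi_N|}\bigl(\sum_{\bar n\in (v+\Phi_N)\setminus\Phi_N} - \sum_{\bar n\in \Phi_N\setminus(v+\Phi_N)}\bigr)\Delta_{\bar n}$ where $v=g(m_1,\dots,m_k)$; applying this to a fixed $a\in A$ and using $|\Delta_{\bar n}(a)|\le\|a\|$, the difference is bounded by $\|a\|\cdot|\Phi_N\,\triangle\,(v+\Phi_N)|/|\Phi_N|$, which tends to $0$ by the defining property of a F\o lner sequence. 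Therefore $\omega\circ\alpha'^g(a)=\lim_N\bar\Delta_N(\alpha'^g(a))=\lim_N\bar\Delta_N(a)=\omega(a)$, and since $m_\iota$ (hence $v$, hence $g\mapsto gv$ over all $g\in\mathbb Z$) was arbitrary, $\omega\in J((\mathbf A'_\iota)_{\iota\in I})$. I would present the generator computation for $\bar\Delta_N\circ\alpha'^g$ as a short lemma-style paragraph and then invoke the standard F\o lner estimate without belaboring it.
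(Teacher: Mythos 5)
Your proof is correct and follows essentially the same route as the paper: the first part is the same routine verification (convex combinations of the $\alpha$-invariant joinings $\Delta_{\bar n}$), and for the second part the paper likewise computes $\bar\Delta_N\circ\tau$ as the average of $\Delta_{\bar n}$ over the translate $\Phi_N+\bar m$ via the universal property and then invokes the F\o lner condition $\left|\Phi_N\bigtriangleup(\Phi_N+\bar m)\right|/\left|\Phi_N\right|\to 0$. Your write-up merely fills in the generator computation and the explicit $\|a\|\cdot\left|\Phi_N\bigtriangleup(\Phi_N+g\bar m)\right|/\left|\Phi_N\right|$ estimate that the paper leaves implicit.
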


\begin{proof}
The first part is clear. So assume $\omega(a)$ exists. Then it is
clear that $\omega$ is a state on $\ast_{\iota\in I}B$, and since
$\bar{\Delta}_{N}$ is a joining, we see that
$\omega\circ\psi_{\iota}=\nu$. Let $\tau$ denote dynamics on
$\ast_{\iota\in I}A$ obtained from the $\beta^{m_{\iota}}$ 's.
Setting $\bar{m}:=\left(  m_{1},...,m_{k}\right)  $ and using the
universal property of the free product one easily finds that
\[
\bar{\Delta}_{N}\circ\tau=\frac{1}{\left|  \Phi_{n}\right|
}\sum_{\bar{n}\in\Phi_{N}+\bar{m}}\Delta_{\bar{n}}
\]
and since $\left(  \Phi_{N}\right)_{N\in\mathbb{N}}$ is a F\o lner
sequence, which means that
\[
\left|  \Phi_{N}\bigtriangleup\left(\Phi_{N}+\bar {m}\right) \right|  /\left|  \Phi_{N}\right|
\rightarrow0
\]
as $N\rightarrow \infty$, it follows that $\omega\circ\tau=\omega$
so $\omega$ is indeed a joining of $\left(
\mathbf{A}_{\iota}^{\prime}\right) _{\iota\in I}$.
\end{proof}

The joining $\omega$ in Theorem 4.2 is a simplified version of the
``asymptotic state'' considered in \cite{ABDF}, and the fact that it
is a joining as described, corresponds to part of \cite[Proposition
3.1]{ABDF}. In that paper they however use a countably infinite free
product instead of $\ast_{\iota \in\lbrack k]}A_\iota$ as we did, to
allow for variable $k$, and they use a more abstract averaging
procedure. The essential point remains the same though.

Note that Theorem 4.1 provides an illustration of Theorem 4.2: It is
easy to see that $\left(  [N]+sN\right)  _{N\in\mathbb{N}}$ is a F\o
lner sequence in $\mathbb{Z}$ for any $s\in\mathbb{Z}$, and since
the cartesian product of the terms of F\o lner sequences leads to a
F\o lner sequence in the cartesian product of the involved groups,
we see that
\[
\Phi_{N}:=\left(  [N]+2N\right)  \times\left(  \lbrack N]+4N\right)
\times...\times\left(  \lbrack N]+2kN\right)
\]
provides us with a F\o lner sequence in $\mathbb{Z}^{k}$ for which
$\omega(a):=\lim_{N\rightarrow\infty}\bar{\Delta}_{n}(a)=\left(\ast_{\iota\in
I}\nu\right)(a)$ is simple to verify for the situation in Theorem
4.1. In this case of course $\omega=\ast_{\iota\in I}\nu$ is
trivially a joining of $\left( \mathbf{A}_{\iota}^{\prime}\right)
_{\iota\in I}$.

\section*{Acknowledgment} I thank Anton Str\"{o}h for encouraging me
to study free products of operator algebras, and the National
Research Foundation for financial support.

\end{document}